\newtheorem*{thm}{Theorem}
\newtheorem{remark}{Remark}
\newcommand{\blind}{0}
\begin{document}

\def\spacingset#1{\renewcommand{\baselinestretch}%
{#1}\small\normalsize} \spacingset{1}


\if0\blind
{
  \title{\bf On Round-Robin Tournaments with a Unique Maximum Score}

  \author{
  Yaakov Malinovsky
    \thanks{email: yaakovm@umbc.edu}
   \\
    Department of Mathematics and Statistics\\ University of Maryland, Baltimore County, Baltimore, MD
    21250, USA\\
    \\
  John W. Moon
    \thanks{email: jwmoon@ualberta.ca}
   \\
    Department of Mathematical and Statistical Sciences\\ University of Alberta, Edmonton, AB T6G 2G1,
    Canada\\
}
  \maketitle
} \fi

\if1\blind
{
  \bigskip
  \bigskip
  \bigskip
  \begin{center}
    {\LARGE\bf Title}
\end{center}
  \medskip
} \fi

\begin{abstract}
Richard Arnold Epstein (1927-2016) published the first edition of "The Theory of Gambling and
Statistical Logic" in
1967.
He introduced some material on round-robin tournaments (complete oriented graphs) with $n$ labeled
vertices in
Chapter 9; in particular, he stated, without proof, that the probability that there is a unique
vertex with the
maximum score
tends to $1$ as $n$ tends to infinity.
Our goal here is to give a proof of this result along with
some historical remarks and comments.

\end{abstract}

\noindent%
{\it Keywords: Complete graph, large deviations, maximal score, probabilistic inequalities, random graph, round-robin tournaments

\noindent%
{\it MSC2020: 05C20,05C07,60F10, 60E15}
}
\spacingset{1.45} 

\newpage
\section{Introduction}
\label{se:Int}
In a classical round-robin tournament, each of $n$ players wins or loses a game against each of the
other $n-1$
players \citep{Moon1968}. Let $X_{ij}$ equal $1$ or $0$ according to whether player $i$ wins or loses the game
played against
player $j$, for
$1 \leq i, j \leq n$, $i \neq j$, where $X_{ij} + X_{ji} = 1$. We assume that all ${n \choose 2}$
pairs $(X_{ij},
X_{ji})$ are independently distributed with $P(X_{ij} = 1) =P(X_{ji} = 0) = 1/2$. Let
\begin{equation*}
s_{i}:=s_i(n)=\sum_{j=1, j\neq i}^{n}{X_{ij}}
\end{equation*}
denote the score of player $i, 1\leq i \leq n$, after playing against all the other $n-1$ players. We
refer to $\left(s_1, s_2,\ldots, s_n\right)$ as the score sequence of the tournament.
When the identity of the players with a given score in a tournament is of no particular significance, we often rearrange the elements of the score sequence in nondecreasing order and refer to this rearranged sequence as the nondecreasing score sequence of the tournament.
These tournaments can be represented by complete oriented graphs in which the vertices represent the players and each pair of distinct vertices $i$ and $j$ is joined by an edge oriented from $i$ to $j$ or from $j$ to $i$ according to whether $X_{ij} = 1$ or $X_{ji} = 1$. The frequency of a given $n$-vertex nondecreasing score sequence is the sum of the frequencies (or the total number) of all labelled n-vertex tournaments that contain the same number of vertices of score $k$ as the number of elements of value $k$ in the given nondecreasing sequence, for $0\leq k\leq n-1$.

Round-robin tournaments can be considered as a model of paired comparison experiments used in an
attempt to rank a number of objects with respect to some criterion--or at least to determine if there is any significant difference between the objects--when it is impracticable to compare all the objects simultaneously: see, e.g., \cite{Z1929}, \cite{D1988}, \cite{DE2001}, and \cite{AK2022}. In particular, \cite{D1959} generated the
score sequences
of tournaments with $n$ players for $3 \leq n \leq 8$ and their frequencies by expanding products of
the form
\begin{equation*}
F(n)=\prod_{1\leq i <j\leq n}\left(\frac{1}{2}w_i+\frac{1}{2}w_j\right).
\end{equation*}

For example,
\begin{equation*}
F(3)=\frac{1}{2^3}\left(w_1^2w_2+w_1w_2^2+w_1^2w_3+w_1w_3^2+w_2^2w_3+w_2w_3^2+2w_1w_2w_3\right);
\end{equation*}
so the joint probability mass function of the scores are
\begin{align*}
&
P\left(s_1=2, s_2=1, s_3=0\right)=\cdots=P\left(s_1=0, s_2=1, s_3=2\right)=\frac{1}{8},\,\,\,\, \text{and}\\
&
P\left(s_1=1, s_2=1, s_3=1\right)=\frac{2}{8};
\end{align*}
and the frequencies of the nondecreasing score sequences $(0, 1, 2)$ and $(1, 1, 1)$ are six and two, respectively.
\cite{D1959} used this information to develop, among other things, tests for deciding
whether the maximum
in a given outcome was significantly larger than the expected value $(n-1)/2$ of a given score.

Let $r_n$ denote the probability that an ordinary tournament with $n$ labeled vertices has a unique
vertex with maximum score. \citet[p.~353]{E1967}
gave the values $r_4 = .5, r_5 = .586, r_6 = .627, r_7 = .581$, and $r_8 = .634$ without further explanation.
However, a reference to \cite{D1959} is given a few pages later, so presumably he deduced these values for $r_i$ from Table 1 in \cite{D1959}, except for one error: the value for $r_8$ should have been
$160,241,152/2^{28}=.5969\cdots$.
\cite{S2022} has recently
pointed  out that
\cite{M1923} generated the score sequences and their frequencies for tournaments with up to $9$
vertices and his results agree with David's for $n=8$. It follows
from MacMahon's data that $r_9=42,129,744,768/2^{36}=.6130\cdots$.

Epstein also stated, without a proof or reference, that as $n$ increases indefinitely, $r_n$ approaches unity.
Some later editions of his book contain more material on tournaments but the material on $r_n$ remains unchanged. A survey paper by \cite{G1984}, on various unsolved problems, mentions Epstein's problem on $r_n$ as being still unsolved.

In the next section we shall give some additional numerical data and some simulation results that illustrate the behavior of $r_n$. In Section \ref{sec:3} we show that Epstein's Conjecture is indeed correct.

\section{Numerical Data and Monte-Carlo Simulations}
\label{sec:2}

As a partial check, we obtained the same value for $r_9$, as given earlier, by determining the number of ways of constructing 9-vertex tournaments with a unique vertex $v$ of maximum score by adjoining $v$ to an 8-vertex tournament with any given 8-vertex nondecreasing score sequence. We then tried to determine the value of $r_{10}$, which was unknown to us at the time, in the same way from information about the 9-vertex case. In doing this we discovered that MacMahon's values of $361,297,520$ for the nondecreasing score sequence $(2, 2, 3, 3, 4, 4, 6, 6, 6)$ and its complement were incorrect; these frequencies should have been divisible by $9$, since there are $9$ choices for the label of the winner of the match between the two vertices of score $4$ (in both cases). These two frequency values should each be increased by $10,000$; and, using these corrected values we found that $r_{10}=21,293,228,876,800/2^{45} = .6051\cdots$.

During this process we discovered that Doron Zeilberger \citep{Z2016} had extended MacMahon's work and had generated the nondecreasing score sequences and their frequencies for tournaments with up to 15 vertices using the Maple program. (We remark that Zeilberger's frequencies for the two sequences mentioned earlier agree with the corrections we gave.)
Using a Matlab program, the values of $r_n$ were deduced from Zeilberger's data for $n=10$, $11$, and $12$.
The value obtained when $n=10$ agreed with the value stated above.
And, as a partial check, we confirmed that the value for $r_{12}$ obtained by using $12$-vertex frequency data can also be deduced from the $11$-vertex data.
The values for $n=4,5,\ldots,12$ are given in Table 1. We also learned that the sequence \url{https://oeis.org/A013976} contains the values of the number of tournaments with a unique vertex of maximal score for $n = 1, 2,\ldots,16$; these values are attributed to Michael Stob and Andrew Howroyd.

It is not feasible to test Epstein's claim  for large values of $n$ by generating score sequences and their frequencies directly because of the length of time this would take.
For example, executing the \cite{Z2016} Maple code for $n=17$ on a powerful computer (a dual CPU Intel 2620 v4, 1TB of memory, Unix operating systems) took 157:04 hours. We note that in this case, the number of different nondecreasing score sequences is 6,157,068 (see \url{https://oeis.org/A000571} and references therein.)
So we have used Monte-Carlo simulations \citep{MU1949} to test Epstein's statement for larger values of $n$. For a
given value of $n$ we sample $n(n-1)/2$ values of random Bernoulli variables $X_{ij}, 1 \leq i < j \leq n$, where
$P(X_{ij}=0)=P(X_{ij}=1)=1/2$; this determines a random n-vertex  tournament and its score sequence. We repeated this process $M$ times for a predetermined integer $M$. We let $I_t$ denote a random indicator function that equals one
if the tournament obtained at the t-th repetition has a unique score of maximum value, and equals zero otherwise, for $1 \leq t \leq M$. Then $\hat{r}_{n}(M)=1/M\sum_{t=1}^{M}I_t$ is an unbiased estimator of $r_n$
(i.e., $E\left(\hat{r}_{n}(M)\right)=r_n$ for any $M$); also $\hat{r}_n$ is a consistent estimator of $r_n$ for large $M$, (i.e., $\lim_{M\rightarrow \infty}P\left(|\hat{r}_{n}(M)-r_n|>\varepsilon\right)=0$
for any $\varepsilon>0$).
A consistent estimator ${\displaystyle \hat{\sigma}\left(\hat{r}_{n}(M)\right)}$ of the standard deviation of ${\displaystyle \hat{r}_{n}(M)}$  is ${\displaystyle \hat{\sigma}\left(\hat{r}_{n}(M)\right)=\left({\hat{r}_{n}(M)(1-\hat{r}_{n}(M))}/{M}\right)^{0.5}}$.

We used smaller values of $M$ for some of the larger values of $n$ because of time constraints. The
results of these
simulations are given in the Table 1 below.

\begin{table}[H]
\label{tab:1}
\caption{$r_4,\ldots,r_8$ were calculated from the scores distribution given in Table 1 of
\cite{D1959}; $r_9$ from \cite{M1923} data; $r_{10}, r_{11}, r_{12}$ from \cite{Z2016} data; see also \url{https://oeis.org/A013976}.}
\begin{center}
\begin{tabular}{lllllllll}
  $n$ & $M$ &$r_n$ & $\hat{r}_n(M)$ & $10^3\hat{\sigma}\left(\hat{r}_{n}(M)\right)$ \\
  \hline
  4 &       $10^6$          &  $0.5$                                  & $0.5003\cdots$            & $0.499\cdots$      &\\

  5 &       $10^6$          &  $600/2^{10}=0.5859\cdots$              & $0.5862\cdots$            & $0.492\cdots$      &\\

  6 &       $10^6$          &  $20,544/2^{15}= 0.6269\cdots$          & $0.6267\cdots$            & $0.483\cdots$      &\\

  7 &       $10^6$          &  $1,218,224/2^{21}=0.5808\cdots$        & $0.5815\cdots$            & $0.493\cdots$      &\\

  8 &       $10^6$          &  $160,241,152/2^{28}=0.5969\cdots$      & $0.5965\cdots$            & $0.490\cdots$      &\\

  9 &       $10^6$          & $42,129,744,768/2^{36}=0.6130\cdots$    & $0.6132\cdots$            & $0.487\cdots$      &\\

  10&       $10^6$          & $21,293,228,876,800/2^{45}=0.6051\cdots$    & $0.6053\cdots$        & $0.488\cdots$      &\\

  11&       $10^6$          & $22,220,602,090,444,032/2^{55}=0.6167\cdots$& $0.6164\cdots$        & $0.486\cdots$      &\\

  12&       $10^6$          & $45,959,959,305,969,143,808/2^{66}=0.6228\cdots$    & $0.6231\cdots$  & $0.484\cdots$ &\\

  13&       $10^6$          &                                      & $0.6236\cdots$      & $0.484\cdots$ &\\
  14&       $10^6$          &                                      & $0.6325\cdots$      & $0.482\cdots$ &\\

  15&       $10^6$    &                                           & $0.6364\cdots$      & $0.481\cdots$ &\\


  30&       $10^6$          &                                      & $0.6903\cdots$      & $0.462\cdots$ &\\

  50&       $10^6$          &                                      & $0.7299\cdots$      & $0.444\cdots$      &\\

  100&      $10^6$          &                                      & $0.7797\cdots$      & $0.414\cdots$       &\\

  500&      $10^6$          &                                      & $0.8746\cdots$       & $0.331\cdots$      &\\

  1,000&    $10^6$          &                                      & $0.9032\cdots$       & $0.295\cdots$      &\\

  10,000&   $10^5$           &                                      & $0.9623\cdots$       & $0.601\cdots$      &\\
  100,000&  $10^3$           &                                      & $0.986$       & $3.715\cdots$      &\\

\end{tabular}
\end{center}
\end{table}


\section{The Uniqueness of the Maximum Score}
\label{sec:3}
\subsection{Useful Facts and Notation}
Let  $(p_{ij})$ denote a probability matrix such that $p_{ij} + p_{ji} = 1$, and $p_{ij}=P(X_{ij} = 1)$ for $1 \le i \le j \le n$, $p_{ii}=0$ for $1 \le i \le n$; and where the variables $X_{ij}$ and $X_{ji}$ are as defined in Section \ref{se:Int}.
\cite{H1963} used a coupling argument to establish the following inequality for the joint distribution function of the scores $s_1,\ldots,s_n$ in  a round-robin tournament:

\begin{equation}
\label{eq:H}
P\left(s_1<k_1,\ldots, s_m<k_m\right)\leq P\left(s_1< k_1\right)\cdots P\left(s_m<k_m\right),
\end{equation}
where $m \le n$, for any such probability matrix $(p_{ij})$, and any numbers $\left(k_1,\ldots,k_m\right)$;
the inequality also holds if the $<$ sign is replaced by the $\leq$ sign throughout.  We assume here that $p_{ij}=1/2$ for all $i\leq j$. As we shall see presently, Huber's inequality has implications for the maximum scores in tournaments. We wrote Professor Noga Allon about Epstein's conjecture and he \citep{A2022} referred us to a paper by \cite{EW1977} that considered the analogous problem for the vertices of maximum degree in a random labelled graph in which pairs of distinct vertices are joined by an edge with probability $1/2$.

For expository convenience we introduce some notation and relations that we shall need later. Let
\begin{align*}
&
b(n-1, j)=P(s_i = j)={{n-1}\choose{j}}\frac{1}{2^{n-1}}
\end{align*}
and
\begin{align*}
&
B(n-1, j)=P(s_i > j) = \sum_{k > j}b(n-1, k)
\end{align*}
for $0 \leq j \leq n-1$ and $1 \leq i \leq n$.

Next, let
\begin{equation}
\label{eq:tn}
t_{n-1}=\lceil(n-1)/2 + x_{n-1}((n-1)/4)^{1/2}\rceil
\end{equation}
where
\begin{equation}
\label{eq:xn}
x_{n-1}=(2\log(n-1)-(1+\epsilon)\log(\log(n-1)))^{1/2}
\end{equation}
for an arbitrary constant $\epsilon$ between $0$ and $1$, say. It is not difficult to see that
\begin{align}
\label{eq:NEW}
x_{n - 1}\leq\left((t_{n - 1} - (n - 1)/2\right)((n - 1)/4)^{-1/2}\leq  x_{n - 1} + ((n - 1)/4)^{-1/2}.
\end{align}
It follows from \eqref{eq:NEW} and definition \eqref{eq:xn} that $x_{n-1}\rightarrow \infty$ and  $x_{n-1}=o(n^{1/6})$ as $n\rightarrow \infty$, and the same conclusion holds when $x_{n-1}$ is replaced by
\begin{align*}
\left((t_{n-1} - (n - 1)/2\right)\left((n - 1)/4\right)^{-1/2}.
\end{align*}

Consequently, we may appeal to relation (4.5.1) in \citet[p.~204]{R1970} and relations (2.7) and (6.7) in \citet[pp.~180 \& 193]{F1968} to conclude that

\begin{equation}
\label{eq:bn}
b(n-1, t_{n-1})\sim \left(\frac{2}{\pi(n-1)}\right)^{1/2}e^{-\frac{x_{n-1}^2}{2}}
\sim \frac{\sqrt{2}(\log(n-1))^{(1+\epsilon)/2}}{\sqrt{\pi(n-1)^3}}
\end{equation}
and

\begin{equation}
\label{eq:Bn}
B(n-1, t_{n-1})\sim \frac{1}{\sqrt{2\pi}}
\frac{1}{x_{n-1}} e^{-\frac{x_{n-1}^2}{2}}\sim \frac{(\log(n-1))^{\epsilon/2}}{\sqrt{4\pi}(n-1)}.
\end{equation}

\subsection{Main Result}
\begin{thm}
The probability that a random n-vertex tournament $T_n$ has a unique vertex of maximum score tends to
$1$ as $n$ tends
to infinity. In particular, if $t_{n-1}$ is defined as in \eqref{eq:tn} and \eqref{eq:xn} and
$s^{\star}$ denotes the
maximum value of the scores $s_1,\ldots, s_n$ in $T_n$, then the following statements hold:

\begin{enumerate}
\item[(i)] [\cite{H1963}]
$P(s^{\star}>t_{n-1})\rightarrow 1$ as $n \rightarrow \infty$.
\item[(ii)] If $W_n=W_n(T_n)$ denotes the number of  ordered pairs of distinct vertices $u$ and $v$
    in $T_n$ such
    that
$s_u=s_v=h$ for some integer $h$ such that   $t_{n-1}  <  h   \leq  n-1$, then
$P(W_n > 0)\rightarrow 0$ as $n \rightarrow \infty$.
\end{enumerate}
\end{thm}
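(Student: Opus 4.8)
The plan is to prove the two statements (i) and (ii) separately and then combine them: if $s^\star>t_{n-1}$ while simultaneously no two distinct vertices share a common score in the range $[t_{n-1},n-1]$, then the maximum score is attained by exactly one vertex. Indeed, on $\{s^\star>t_{n-1}\}\cap\{W_n=0\}$ the common maximum value $s^\star$ lies in $[t_{n-1},n-1]$, so two distinct vertices attaining it would force $W_n\ge 2$, a contradiction. Since $P(s^\star>t_{n-1})\to 1$ by (i) and $P(W_n=0)\to 1$ by (ii), the intersection has probability tending to one, which is the theorem. So the real content is (i) and (ii).

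For part (i) I would bound the complementary event $\{s^\star\le t_{n-1}\}=\{s_1\le t_{n-1},\dots,s_n\le t_{n-1}\}$. Applying Huber's inequality \eqref{eq:H} in its ``$\le$'' form with $k_1=\cdots=k_n=t_{n-1}$ gives
\[
P(s^\star\le t_{n-1})\le\prod_{i=1}^n P(s_i\le t_{n-1})=\bigl(1-B(n-1,t_{n-1})\bigr)^n\le e^{-nB(n-1,t_{n-1})}.
\]
By the asymptotic \eqref{eq:Bn}, $nB(n-1,t_{n-1})\sim (\log(n-1))^{\epsilon/2}/\sqrt{4\pi}\to\infty$ (here $\epsilon>0$ is essential), so the right-hand side tends to $0$ and $P(s^\star>t_{n-1})\to1$. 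This step is essentially immediate once Huber's inequality and \eqref{eq:Bn} are in hand.

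For part (ii) I would use the first-moment method, $P(W_n>0)\le E(W_n)$. Writing $W_n$ as a sum over ordered pairs $(u,v)$ of distinct vertices and over levels $h$ with $t_{n-1}\le h\le n-1$ of the indicator that $s_u=s_v=h$, linearity gives $E(W_n)=n(n-1)\sum_{h\ge t_{n-1}}P(s_u=s_v=h)$ for one fixed pair. The crux is to evaluate this joint probability correctly, since $s_u$ and $s_v$ are dependent through the single game between $u$ and $v$. Conditioning on $X_{uv}$ decouples the two scores into independent $\mathrm{Binomial}(n-2,1/2)$ contributions from the remaining games, and I expect to obtain $P(s_u=s_v=h)=b(n-2,h-1)\,b(n-2,h)$.

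It then remains to show $E(W_n)=n(n-1)\sum_{h\ge t_{n-1}}b(n-2,h-1)b(n-2,h)\to0$. Because the binomial weights are decreasing on the upper tail, I would pull one factor out at its largest value $h=t_{n-1}$ and recognize the remaining sum as a tail probability, obtaining $E(W_n)\le n(n-1)\,b(n-2,t_{n-1}-1)\,B(n-2,t_{n-1}-1)$, after which the passage from $n-2$ to $n-1$ is harmless via $b(n-2,h)\le 2b(n-1,h)$ and the analogous bound $B(n-2,k)\le 2B(n-1,k)$. Inserting \eqref{eq:bn} and \eqref{eq:Bn} makes this of order $O\bigl(n^{2}\cdot n^{-3/2}(\log n)^{(1+\epsilon)/2}\cdot n^{-1}(\log n)^{\epsilon/2}\bigr)=O\bigl(n^{-1/2}(\log n)^{(1+2\epsilon)/2}\bigr)$, which tends to $0$ since the factor $n^{-1/2}$ overwhelms the logarithmic factor; Markov's inequality then finishes (ii). I expect the main obstacle to be the dependence bookkeeping in computing $P(s_u=s_v=h)$ together with checking that the index shifts between $n-1$ and $n-2$ (and between $t_{n-1}$ and $t_{n-1}-1$) cost only constants, since $x_{n-1}=o(n^{1/6})$ keeps adjacent binomial weights comparable; the tail estimates themselves are routine given \eqref{eq:bn}--\eqref{eq:Bn}.
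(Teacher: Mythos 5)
Your proposal is correct and follows essentially the same route as the paper: part (i) is the paper's first proof (Huber's inequality \eqref{eq:H} with all $k_i=t_{n-1}$, the bound $1-c\le e^{-c}$, and the tail asymptotic \eqref{eq:Bn}), and part (ii) is the paper's first-moment argument, including the identical decoupling computation $P(s_u=s_v=h)=b(n-2,h-1)\,b(n-2,h)$. The only cosmetic difference is the bookkeeping in (ii): the paper rewrites the product as $4\tfrac{h}{n-1}\bigl(1-\tfrac{h}{n-1}\bigr)b(n-1,h)^2\le b(n-1,h)^2$ and then extracts the largest term, whereas you pull out the largest factor at the $(n-2)$-level and shift indices with factor-of-two bounds; both yield the same $O\bigl(n^{-1/2}(\log n)^{1/2+\epsilon}\bigr)$ estimate.
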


\begin{proof}\,\\
{\bf First Proof of $(i)$}\\
\cite{H1963} observed that the required conclusion follows from the facts that
\begin{align}
\label{eq:Hln}
&
P\left(s^{\star}<t_{n-1}\right) \leq  \left(1-B(n-1, t_{n-1})\right)^n
\leq  e^{-nB(n-1, t_{n-1})}
\leq (1+o(1))e^{-\frac{(\log(n-1))^{\epsilon/2}}{\sqrt{4\pi}}}   \rightarrow 0,
\end{align}
as $n\rightarrow \infty$, appealing to the definition of $B\left(n-1, t_{n-1}\right)$,  inequality
\eqref{eq:H},  the
inequality $1-c \leq  e^{-c}$ , and relation \eqref{eq:Bn}.
\medskip

\noindent
{\bf Second Proof of $(i)$}

The second proof of $(i)$ is an application of the 2nd Moment Method frequently applied to probabilistic problems in Graph Theory \citep[Chapter 4]{AS2016}.

Let $Y_t=Y_t(T_n)$ denote the number of vertices in $T_n$  with score larger than
$t=t_{n-1}$, i.e.
$Y_t=\sum_{j=1}^{n}I\left(s_j>t\right).$
Since $P\left(s_u >t\right)=B\left(n-1, t\right)$ for any $u$ and $t$, it follows that

\begin{equation}
\label{eq:Yn}
E\left(Y_t\right)=nB(n-1, t)\sim n \frac{(\log(n-1))^{\epsilon/2}}{\sqrt{4\pi}(n-1)},
\end{equation}
upon appealing to \eqref{eq:Bn} with $t=t_{n-1}$.

To determine the variance $Var(Y_t)$ we first observe that for any ordered pair of vertices $u$ and
$v$  we can write
their scores as $s_u = s'_u+X_{uv}$ and $s'_v + X_{vu}$, where  $s'_u$  and  $s'_v$ are the number of
games $u$ and
$v$ win against the remaining $n-2$ players; since  $s'_u$  and  $s'_v$ are independent variables, it
follows that
\begin{align*}
&
P\left(s_u > t, s_v  >  t\right)\\
&
=P\left(X_{uv}=0\right)P\left(s'_u  > t\right)P\left(s'_v  >  t-1\right)+
P\left(X_{uv} = 1\right)P\left(s'_u > t-1\right)P\left(s'_v  >  t \right)\\
&
=B(n-2, t)B(n-2, t-1).
\end{align*}
Consequently,
\begin{align}
\label{eq:Vn}
&
Var(Y_t)=E(Y_t)+E(Y_t(Y_t-1))-(E(Y_t))^2 \nonumber\\
&
=E(Y_t)+n(n-1) B(n-2, t)B(n-2, t-1)-(nB(n-1, t))^2.
\end{align}
Now
\begin{align*}
&
B(n-1, t)=1/2(B(n-2, t) + B(n-2, t-1)),
\end{align*}
so relation \eqref{eq:Vn} simplifies to
\begin{align}
\label{eq:Vn1}
&
Var(Y_t)=E(Y_t)-nB(n-2, t)B(n-2, t-1)-(1/4)n^2\left\{B(n-2, t-1)-B(n-2, t)\right\}^2\nonumber\\
&
= E(Y_t)-nB(n-2, t)B(n-2, t-1)-(1/4)n^2b(n-2, t)^2    \leq  E(Y_t).
\end{align}
Therefore,
\begin{align*}
&
P\left(Y_t=0\right)\leq P\left(|Y_t-E(Y_t)|\geq E(Y_t)\right)\leq \frac{Var(Y_t)}{(E(Y_t))^2}\leq
\frac{1}{E(Y_t)}\rightarrow 0,
\end{align*}
as $n\rightarrow \infty $, by Chebyshev's Inequality, relation \eqref{eq:Vn1}, and \eqref{eq:Yn}. This
implies
conclusion (i).
\bigskip

\noindent
{\bf {Proof of $(ii)$}}

We now turn to conclusion (ii). In view of conclusion (i), we may restrict our attention to
tournaments $T_n$ in which
the maximum value $s^{\star}$ of the scores realized by the vertices is at least as large as
$t=t_{n-1}$.  Recall that
$W_n = W_n(T_n)$ denotes the number of ordered pairs of distinct vertices $u$ and $v$ of $T_n$ such
that $t<s_u=s_v$
where
$t \leq  n-1$, i.e. $$W_n=\sum_{1\leq v<u \leq n }I(t<s_u=s_{v}).$$  Let $s'_u$ and $s'_v$ denote the scores of two
such
vertices $u$ and $v$ in their matches with the remaining $n-2$ players and note that  $s'_u$ and
$s'_v$ are
independent variables. Then it follows that
\begin{align}
\label{eq:jn}
&
P\left(s_u=h, s_v=h\right)=1/2 P(s'_u=h-1)P(s'_v=h)+1/2P(s'_u=h)P(s'_v=h-1) \nonumber\\
&
={{n-2}\choose{h-1}}(1/2)^{n-2}{{n-2}\choose{h}}(1/2)^{n-2}\nonumber\\
&
=4\frac{h}{n-1}\left(1-\frac{h}{n-1}\right){{n-1}\choose{h}}(1/2)^{n-1}{{n-1}\choose{h}}(1/2)^{n-1}  \leq
\left(b(n-1,
h)\right)^2.
\end{align}
Hence,
\begin{align*}
&
E(W_n)=E\left(\sum_{1\leq v<u \leq n }I(t<s_u=s_{v})\right)=n(n-1)E\left(I(t<s_1=s_{2})\right) \\
&
=n(n-1)P\left(t<s_1=s_{2}\right)
=n(n-1)\sum^{n-1}_{h=t+1}P(s_1=h, s_2=h)
\leq n(n - 1) \sum^{n-1}_{h=t+1}b(n-1, h)^2\\
&
\leq   n(n-1) b(n-1,t+1)B(n-1, t)
\leq   n(n-1) b(n-1,t)B(n-1, t)\\
&
\sim \frac{(\log(n-1))^{1/2 + \epsilon}}{\pi\sqrt{2(n-1)}}\rightarrow 0,
\end{align*}
as $n\rightarrow \infty $. Consequently, appealing to \eqref{eq:bn}, \eqref{eq:Bn}, and to the fact that
$W_n=W_n
I(W_n>0)\geq I(W_n>0)$, we find that
\begin{equation*}
1-P(W_n=0)=P(W_n > 0)\leq E(W_n)\rightarrow 0,
\end{equation*}
 as required.
\end{proof}

\section{Remarks}
\begin{remark}
{\normalfont For the sake of completeness, we mention an upper bound that \cite{H1963} gave for the maximum score
$s^{\star}$ in almost
all tournaments $T_n$. Let $t'=t'_{n-1}$ be defined as $t=t_{n-1}$ was defined earlier except that the
$\epsilon$ in
relation \eqref{eq:xn} is replaced by $-\epsilon$ and without the ceiling function, it turns out that a relation corresponding to
\eqref{eq:Bn} is
\begin{equation*}
\label{eq:Bnn}
B(n-1, t'_{n-1})
\sim \frac{(\log(n-1))^{-\epsilon/2}}{\sqrt{4\pi}(n-1)}.
\end{equation*}
Hence,  it follows from Boole's inequality that
\begin{equation}
\label{eq:Bln}
P\left(s^{\star} >  t'\right) \leq  nB(n-1, t')=O\left((\log(n-1))^{-\epsilon/2}\right),
\end{equation}
as $n\rightarrow \infty$.
From \eqref{eq:Hln} and \eqref{eq:Bln}
\cite{H1963} concluded that
\begin{equation*}
s^{\star}-\frac{n-1}{2}-\sqrt{\frac{n-1}{4}}\sqrt{2\log(n-1)}\rightarrow 0
\end{equation*}
in probability as $n\rightarrow \infty$.
}
\end{remark}

\begin{remark}
{\normalfont
\cite{MM2022a} and \cite{MR2022} have extended Huber's inequality to a more general round-robin tournament model and to other tournaments and games models, respectively.
}
\end{remark}

\begin{remark}
{\normalfont
\cite{B1981} has derived numerous
results on the
distributions of the degree sequences $ d_1 \geq d_2 \geq \cdots \geq d_n $ of ordinary n-vertex
graphs in which edges
are present with probability $p$; see also, \cite{I1973}, \cite{B1981}, \cite{B2001}, \cite{FK2016} and \cite{M2023}.
A similar problem concerning a round-robin tournament model was considered recently in \cite{M2021}.
}
\end{remark}

\section*{Acknowledgements}
We thank Noga Alon for referring us to the work of Paul Erd\H{o}s and Robin J. Wilson.
We also thank Boris Alemi for executing the Maple program on a powerful computer and obtaining the data in the required format.
Research of Yaakov Malinovsky is supported in part by BSF grant 2020063.

{}

\end{document}